\def\E{\mathbb{E}}
\def\N{\mathbb{N}}
\def\R{\mathbb{R}}
\def\E{\mathbb{E}}
\numberwithin{equation}{section}
\theoremstyle{plain}
\newtheorem{thm}{Theorem}[section]
\newtheorem{prop}{Proposition}[section]
\newtheorem{assum}{Assumption}[section]
\newtheorem{lemma}{Lemma}[section]
\begin{document}

\begin{frontmatter}
\title{PAC-Bayesian bounds for sparse~regression estimation
with~exponential~weights}
\runtitle{PAC-Bayesian bounds for sparse regression estimation}

\begin{aug}
\author{\fnms{Pierre} \snm{Alquier}\corref{}\thanksref{t1}\ead[label=e1]{alquier@ensae.fr}
\ead[label=u1,url]{http://alquier.ensae.net/}}

\address{CREST - ENSAE \\
3, avenue Pierre Larousse \\
92240 Malakoff France \\
and \\
 LPMA - Universit\'e Paris 7 \\
 175, rue du Chevaleret \\
75205 Paris Cedex 13 France\\
\printead{e1}\\
\printead{u1}}
\end{aug}
\medskip
\textbf{\and}

\begin{aug}
\author{\fnms{Karim} \snm{Lounici}
\ead[label=e2]{klounici@math.gatech.edu}
\ead[label=u2,url]{http://people.math.gatech.edu/~klounici6/}}

\address{School of Mathematics \\
Georgia Institute of Technology \\
Atlanta, GA 30332-0160 USA \\
\printead{e2}\\
url: \textup{\texttt{\href{http://people.math.gatech.edu/~klounici6/}{http://people.math.gatech.edu/$\sim$klounici6/}}
}}

\thankstext{t1}{Research partially supported by the French ``Agence
Nationale pour la Recherche''
under grant ANR-09-BLAN-0128 ``PARCIMONIE''.}

\runauthor{P. Alquier and K. Lounici}

\affiliation{Univ. Paris 7, CREST and GeorgiaTech}

\end{aug}

\begin{abstract}
We consider the sparse regression model where the number of
parameters $p$ is larger than the sample size $n$. The difficulty
when considering high-dimensional problems is to propose estimators
achieving a good compromise between statistical and computational
performances. The Lasso is solution of a convex minimization
problem, hence computable for large value of $p$. However stringent
conditions on the design are required to establish fast rates of
convergence for this estimator. Dalalyan and Tsybakov
\cite{arnak,Dal-Tsy-2010,Dal-Tsy-2010b} proposed an exponential
weights procedure achieving a good compromise between the
statistical and computational aspects. This estimator can be
computed for reasonably large $p$ and satisfies a sparsity oracle
inequality in expectation for the empirical excess risk only under
mild assumptions on the design. In this paper, we propose an
exponential weights estimator similar to that of \cite{arnak} but
with improved statistical performances. Our main result is a
sparsity oracle inequality in probability for the true excess
risk.
\end{abstract}

\begin{keyword}[class=AMS]
\kwd[Primary ]{62J07}
\kwd[; secondary ]{62J05}
\kwd{62G08}
\kwd{62F15}
\kwd{62B10}
\kwd{68T05}
\end{keyword}

\begin{keyword}
\kwd{Sparsity oracle inequality}
\kwd{high-dimensional regression}
\kwd{exponential weights}
\kwd{PAC-Bayesian inequalities}
\end{keyword}
\received{\smonth{9} \syear{2010}}
\end{frontmatter}

\tableofcontents

\section{Introduction}

We observe $n$ independent pairs $(X_{1},Y_{1}),\ldots,(X_{n},Y_{n})\in
\mathcal{X}\times \mathbb R$ (where $\mathcal{X}$ is any measurable
set) such that
\begin{equation}\label{regmodel}
  Y_{i} = f(X_{i}) + W_{i},\, 1\leqslant i \leqslant n,
\end{equation}
where $f\,:\, \mathcal{X}\rightarrow \mathbb{R}$ is the unknown
regression function and the noise variables $W_1,\ldots,W_n$ are
independent of the design $(X_1,\ldots,X_n)$, satisfy also $\mathbb
E W_i = 0$ and $\mathbb E W_i^2\leqslant \sigma^2$ for any
$1\leqslant i \leqslant n$ and for some known $\sigma^2>0$. The
distribution of the sample is denoted by $\mathbb P$, the
corresponding expectation is denoted by $\mathbb E$. For any
function $g\,:\, \mathcal{X}\rightarrow \mathbb{R}$ define $\|g
\|_{n} = \left(\sum_{i=1}^{n}g(X_{i})^{2}/n
 \right)^{1/2}$ and  $\|g\| =
\left(\mathbb{E}\|g \|_{n}^{2}\right)^{1/2}$. Let $\mathcal{F} =
\{\phi_{1},\ldots,\phi_{p}\}$ be a set---called dictionary---of
functions $\phi_{j}\,:\, \mathcal{X} \rightarrow \mathbb{R}$ such
that $\|\phi_j\| =1$ for any $j$ (this assumption can be relaxed).
For any $\theta\in \R^{p}$ define $f_{\theta} =
\sum_{j=1}^{p}\theta_{j}\phi_{j}$, the empirical risk
$$
r(\theta) = \frac{1}{n}\sum_{i=1}^{n}
\Bigl(Y_{i}-f_{\theta}(X_{i})\Bigr)^{2},
$$
and the integrated risk
$$
R(\theta) = \mathbb E\left[\frac{1}{n}\sum_{i=1}^{n}
\Bigl(Y_{i}'-f_{\theta}(X_{i}')\Bigr)^{2}\right],
$$
where $\{(X_1',Y_1'),\ldots,(X_n',Y_n')\}$ is an independent
replication of $\{(X_{1},Y_{1}),\ldots,\break (X_{n},Y_{n})\}$. Let us choose
$ \overline{\theta}\in\arg\min_{\theta\in\mathbb{R}^{p}} R(\theta)
$. Note that the minimum may not be unique, however we do not need
to treat the identifiability question since we consider in this
paper the prediction problem, i.e., find an estimator $\hat\theta_n$
such that $R(\hat\theta_n)$ is close to $\min_{\theta\in
\R^p}R(\theta)$ up to a positive remainder term as small as
possible.

It is a known fact that the least-square estimator
$\hat{\theta}^{LSE}_n \in \arg\min_{\theta\in\R^p} r(\theta)$
performs poorly in high-dimension $p>n$. Indeed, consider for
instance the deterministic design case with i.i.d. noise variables
$N(0,\sigma^{2})$ and a full-rank design matrix, then
$\hat{\theta}^{LSE}$ satisfies
$$\mathbb E \left[\|f_{\hat{\theta}^{LSE}_n} - f\|_n^2\right] - \|f_{\overline{\theta}} - f\|^2_n = \sigma^{2}.$$
In the same context, assume now there exists a vector
$\overline{\theta}\in\arg\min_{\theta \in\mathbb R^p} R(\theta)$
with a number of nonzero coordinates $p_0 \leq n$. If the indices of
these coordinates are known, then we can construct an estimator
$\hat{\theta}^{0}_n$ such that
\vspace*{-3pt}
$$
\mathbb E \left[\|f_{\hat{\theta}^{0}_n} - f\|_n^2 \right]-
\|f_{\overline{\theta}} - f\|^2_n = \sigma^2\frac{p_0}{n}.
\vspace*{-3pt}
$$
The estimator $\hat{\theta}^{0}_n$ is called oracle estimator since
the set of indices of the nonzero coordinates of $\overline{\theta}$
is unknown in practice. The issue is now to build an
estimator, when the set of nonzero coordinates of
$\overline{\theta}$ is unknown, with statistical performances close
to that of the oracle estimator $\hat{\theta}^{0}_n$.

A possible approach is to consider solutions of penalized empirical
risk minimization problems:
\vspace*{-3pt}
$$ \hat{\theta}_{\mathrm{pen}} \in \arg\min_{\theta\in\R^p} \left\{\frac{1}{n}\sum_{i=1}^{n}
\Bigl(Y_{i}-f_{\theta}(X_{i})\Bigr)^{2} + \mathrm{pen}(\theta)
\right\},
\vspace*{-3pt}
$$
 where the penalization $pen(\theta)$ is proportional to
the number of nonzero components of $\theta$ such as for instance
AIC, $C_{p}$ and BIC criteria \cite{aic,cp,bic}. Bunea, Tsybakov and
Wegkamp \cite{BTW07} established for the BIC estimator
$\hat{\theta}_n^{BIC}$ the following non-asymptotic sparsity oracle
inequality. For any $\epsilon>0$ there exists a constant
$C(\epsilon)>0$ such
that for any $p\geqslant 2, n\geqslant 1$ we have
\vspace*{-3pt}
$$
\mathbb E \left[\|f_{\hat{\theta}^{BIC}_n} - f\|_n^2\right]
\leqslant (1+\epsilon) \|f_{\overline{\theta}} - f\|^2_n +
C(\epsilon)\sigma^{2}\frac{p_0}{n}\log \left(\frac{ep}{p_0\vee
1}\right).
\vspace*{-3pt}
$$
Despite good statistical properties, these estimators can only be
computed in practice for $p$ of the order at most a few tens since
they are solutions of non-convex combinatorial optimization problems.

Considering convex penalty function leads to computationally
feasible optimization problems. A popular example of convex
optimization problem is the Lasso estimator (cf. Frank and Friedman
\cite{bridge}, Tibshirani \cite{LassoTib}, and the parallel work of
Chen, Donoho and Saunders \cite{basis} on basis pursuit) with the
penalty term $\mathrm{pen}(\theta) = \lambda |\theta|_1$, where
$\lambda>0$ is some regularization parameter and, for any integer
$d\geq 2$, real $q>0$ and vector $z\in \mathbb R^d$ we define $|z|_q
= (\sum_{j=1}^{d}|z_j^q|)^{1/q}$ and $|z|_\infty = \max_{1\leq j
\leq d} |z_j|$. Several algorithms allow to compute the Lasso for
very large $p$, one of the most popular is known as LARS, introduced
by Efron, Hastie, Johnstone and Tibshirani \cite{LARS}.
However, the Lasso estimator requires strong assumptions on the
matrix $A=(\phi_{j}(X_i))_{1\leqslant i \leqslant n,1\leqslant j
\leqslant p}$ to establish fast rates of convergence results. Bunea,
Tsybakov and Wegkamp \cite{Lasso2} and Lounici \cite{lounici-ejs}
assume a mutual coherence condition on the dictionary. Bickel, Ritov
and Tsybakov \cite{Lasso3} and Koltchinskii \cite{kolt} established
sparsity oracle inequalities for the Lasso under a restricted
eigenvalue condition. Cand\`es and Tao \cite{Dantzig} and
Koltchinskii \cite{koltbernou} studied the Dantzig Selector which is
related to the Lasso estimator and suffers from the same
restrictions. See, e.g., Bickel, Ritov and Tsybakov \cite{Lasso3}
for more details. Several alternative penalties were recently
considered. Zou \cite{zou-adaptive} proposed the adaptive Lasso
which is the solution of a penalized empirical risk minimization
problem with the penalty $\mathrm{pen}(\theta) = \lambda
\sum_{j=1}^p\frac{1}{|\hat{w}_j|}|\theta_j|$ where $\hat{w}$ is an
initial estimator of $\theta$. Zou and Hastie \cite{zou-elastic}
proposed the elastic net with the penalty $\mathrm{pen}(\theta) =
\lambda_1 |\theta|_1 + \lambda_2|\theta|_2^2$,
$\lambda_1,\lambda_2>0$. Meinshausen and B\"uhlmann \cite{stability}
and Bach \cite{bach09} considered bootstrapped Lasso. See also Ghosh
\cite{aen} or Cai, Xu and Zhang \cite{cai} for more alternatives to
the Lasso. All these methods were motivated by their superior
performances over the Lasso either from the theoretical or the
practical point of view. However, strong assumptions on the design
are still required to establish the statistical properties of these
methods (when such results exist). A recent paper by van de Geer and
B\"uhlmann \cite{vdgb} provides a complete survey and comparison of
all these assumptions.

Simultaneously, the PAC-Bayesian approach for regression estimation
was developed by Audibert \cite{AudibertReg,Audibert} and Alquier
\cite{Alquier2006,AlquierPAC}, based on previous works in the
classification context by Catoni \cite{Classif,Cat7,manuscrit}, Mc
Allester \cite{McAllester}, Shawe-Taylor and Williamson
\cite{STW97}, see also Zhang \cite{zhang} in the context of density
estimation. This framework is very well adapted for studying the excess
risk $R(\cdot) - R(\overline{\theta})$ in the regression context
since it requires very weak conditions on the dictionary. However, the methods
of these papers are not
designed to cover the high-dimensional setting under the sparsity assumption.
 Dalalyan and Tsybakov
\cite{daltsy07,arnak,Dal-Tsy-2010,Dal-Tsy-2010b} propose an
exponential weights procedure related to the PAC-Bayesian approach
with good statistical and computational performances. However they
consider deterministic design, establishing their
statistical result only for the empirical excess risk instead of the
true excess risk $R(\cdot)-R(\overline{\theta})$.

In this paper, we propose to study two exponential weights
estimation procedures. The first one is an exponential weights
combination of the least squares estimators in all the possible
sub-models. This estimator was initially proposed by Leung and
Barron \cite{leung} in the deterministic design setting. Note that
in the literature on aggregation and exponential weights, the
elements of the dictionary are often arbitrary preliminary
estimators computed from a frozen fraction of the initial sample so
that these estimators are considered as deterministic functions, the
aggregate is then computed using this dictionary and the remaining
data. This scheme is referred to as 'data splitting'. See for
instance Dalalyan and Tsybakov \cite{Dal-Tsy-2010} and Yang
\cite{yangbernoulli}. Leung and Barron \cite{leung} proved that data
splitting is not necessary in order to aggregate least squares
estimators and raised the question of computation of this estimator
in high dimension. In this paper we explicit the oracle inequality
satisfied by this estimator in the high-dimensional case. For the
second procedure, the design may be either random or deterministic.
We adapt to the regression framework PAC-Bayesian techniques
developed by Catoni \cite{manuscrit} in the classification framework
to build an estimator satisfying a sparsity oracle inequality for
the true excess risk. Even though we do not study the computational
aspect in this paper, it should be noted that efficient Monte Carlo
algorithms are available to compute these exponential weights
estimators for reasonably large dimension $p$ ($p\simeq 5000$), see
in particular the monograph of Marin and Robert \cite{Christian2}
for an introduction to MCMC methods. Note also that in a work
parallel to ours, Rigollet and Tsybakov \cite{RigTsy10} consider
also an exponential weights procedure with discrete priors and
suggest a version of the Metropolis-Hastings algorithm to compute
it.
\vfill\eject


The paper is organized as follows. In Section \ref{part-K} we define
an exponential weights procedure and derive a sparsity oracle
inequality in expectation when the design is deterministic. In
Section \ref{estimator}, the design can be either deterministic or
random. We propose a modification of the first exponential weights
procedure for which we can establish a sparsity oracle inequality in
probability for the true excess risk. Finally Section \ref{proofs}
contains all the proofs of our results.

\section{Sparsity oracle inequality in expectation}
\label{part-K}

Throughout this section, we assume that the design is deterministic
and the noise variables $W_1,\ldots,W_n$ are i.i.d. gaussian
$N(0,\sigma^2)$.

For any $J\subset\{1,\ldots,p\}$ and $K>0$ define
\begin{equation}
\Theta(J) = \biggl\{\theta\in\mathbb{R}^{p}:
         \quad \forall j\notin J,\quad \theta_{j} = 0\biggr\},
\end{equation}
and
\begin{equation}
\Theta_{K}(J) = \biggl\{\theta\in\mathbb{R}^{p}: \quad |\theta|_{1}
\leq K \quad \text{and} \quad  \forall j\notin J,\quad\theta_{j} = 0 \biggr\}.
\end{equation}
For the sake of simplicity we will write
$\Theta_{K}=\Theta_{K}(\{1,\ldots,p\})$.

%
For any subset $J\subset\{1,\ldots,p\}$ define
$$ \hat{\theta}_{J} \in \arg\min_{\theta\in\Theta(J)} r(\theta), $$
where $r(\theta)=\frac{1}{n}\sum_{i=1}^{n}
(Y_{i}-f_{\theta}(X_{i}))^{2}= \|Y - f_{\theta}\|_{n}^{2}
$ with $Y=(Y_{1},\ldots,Y_{n})^{T}$. Denote by
$\mathcal{P}_n(\{1,\ldots,p\})$ the set of all subsets of
$\{1,\ldots,p\}$ containing at most $n$ elements. The aggregate
$\hat{f}_n$ is defined as follows
\begin{equation}\label{aggregate}
\hat{f}_n = f_{\hat{\theta}_n},\quad\hat{\theta}_{n} =
\hat{\theta}_{n}(\lambda,\pi) \stackrel{\triangle}{=} \frac{ \sum_{
J \in \mathcal{P}_n(\{1,\ldots,p\})} \pi_{J} e^{-\lambda\bigl(
r(\hat{\theta}_{J}) + \frac{2\sigma^{2}|J|}{n} \bigr)}
\hat{\theta}_{J} } {\sum_{ J \in \mathcal{P}_n(\{1,\ldots,p\})}
\pi_{J} e^{-\lambda \bigl(r(\hat{\theta}_{J}) +
\frac{2\sigma^{2}|J|}{n}\bigr)} }
\end{equation}
where $\lambda>0$ is the temperature parameter, $\pi $ is the prior
probability distribution on $\mathcal{P}(\{1,\ldots,p\})$, the set
of all subsets of $\{1,\ldots,p\}$, that is, for any $J\in
\{1,\ldots,p\}$, $\pi_{J}\geq 0$ and $\sum_{J\in
\mathcal{P}(\{1,\ldots,p\})}\pi_{J} = 1$.

The next result is a reformulation in our context of Theorem 8 of
\cite{leung}.
\begin{prop}
\label{thm_K} Assume that the noise variables $W_{1},\ldots,W_{n}$
are i.i.d. $N(0,\sigma^2)$. Then the aggregate $\hat{\theta}_{n}$
defined by (\ref{aggregate}) with $0<\lambda \leqslant
\frac{n}{4\sigma^{2}}$ satisfies
\begin{equation}\label{theo1}
 \mathbb{E}\left[r(\hat{\theta}_{n})  \right] \leqslant
\min_{ J\in \mathcal{P}_n(\{1,\ldots,p\})
}\left\{\mathbb{E}[r(\hat{\theta}_{J})] +
\frac{1}{\lambda}\log\left( \frac{1}{\pi_{J}} \right)\right\}.
\end{equation}
\end{prop}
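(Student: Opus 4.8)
The plan is to use the classical Stein-type / unbiased risk estimation argument that underlies Leung and Barron's Theorem 8, adapted to the present aggregation-over-submodels setting. First I would observe that, since the design is deterministic and $W_i \sim N(0,\sigma^2)$ i.i.d., the key quantity to control is $\E[r(\hat\theta_n)]$, and the standard device is to relate the \emph{empirical} risk of the aggregate to an \emph{unbiased estimate} of its risk via Stein's identity. Concretely, one writes the aggregate $\hat f_n$ as a posterior mean $\hat f_n = \sum_J \hat\pi_J f_{\hat\theta_J}$ where the data-dependent weights $\hat\pi_J \propto \pi_J \exp(-\lambda(r(\hat\theta_J) + 2\sigma^2|J|/n))$ and $f_{\hat\theta_J} = P_J Y$ is the orthogonal projection of $Y$ onto the span of $\{\phi_j : j \in J\}$. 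The crucial point is that $r(\hat\theta_J) + 2\sigma^2|J|/n - \sigma^2$ is, up to the irreducible term $\|f - P_J f\|_n^2$-type contributions, an unbiased estimator of $\E[\|f_{\hat\theta_J} - f\|_n^2]$; this is Mallows' $C_p$ identity and follows from $\E\|Y - P_J Y\|_n^2 = \|(I-P_J)f\|_n^2 + \sigma^2(n-|J|)/n$.

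Next I would apply Stein's unbiased risk estimate to the aggregate itself. Writing $\hat f_n = g(Y)$ for a weakly differentiable map $g$, Stein's lemma gives $\E\|g(Y) - f\|_n^2 = \E\|Y - g(Y)\|_n^2 - \sigma^2 + \tfrac{2\sigma^2}{n}\E[\operatorname{div} g(Y)]$. The divergence of the posterior-mean map decomposes into a sum of the divergences of the individual projections (each contributing $|J|$) plus a covariance-type term coming from the differentiation of the weights $\hat\pi_J$; the choice of penalty $2\sigma^2|J|/n$ inside the exponent is precisely what makes this extra term have a favorable sign when $\lambda \le n/(4\sigma^2)$. This sign condition is where the temperature restriction enters. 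Combining, one obtains $\E[r(\hat\theta_n)] \le \E\big[\sum_J \hat\pi_J\big(r(\hat\theta_J) + \tfrac{2\sigma^2|J|}{n}\big)\big] + (\text{nonpositive remainder})$.

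Finally, I would invoke the standard variational (Donsker–Varadhan / Legendre) identity for the Gibbs weights: for the measure $\hat\pi$ minimizing $\rho \mapsto \sum_J \rho_J(r(\hat\theta_J) + 2\sigma^2|J|/n) + \tfrac1\lambda \mathrm{KL}(\rho \| \pi)$ over probability vectors $\rho$, one has
\begin{equation*}
\sum_J \hat\pi_J\Big(r(\hat\theta_J) + \tfrac{2\sigma^2|J|}{n}\Big) + \tfrac1\lambda\,\mathrm{KL}(\hat\pi\|\pi) = -\tfrac1\lambda \log \sum_J \pi_J e^{-\lambda(r(\hat\theta_J) + 2\sigma^2|J|/n)} \le r(\hat\theta_{J_0}) + \tfrac{2\sigma^2|J_0|}{n} + \tfrac1\lambda\log\tfrac1{\pi_{J_0}}
\end{equation*}
for every fixed $J_0$. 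Taking expectations, dropping the nonnegative KL term on the left, and then using the $C_p$ identity $\E[r(\hat\theta_{J_0}) + 2\sigma^2|J_0|/n] = \sigma^2 + \E[\|f_{\hat\theta_{J_0}} - f\|_n^2] = \E[r(\hat\theta_{J_0})]$ appropriately to identify the right-hand side with $\min_J\{\E[r(\hat\theta_J)] + \tfrac1\lambda\log(1/\pi_J)\}$, yields the claim. The main obstacle I anticipate is the careful bookkeeping in the Stein/divergence computation for the aggregate: one must verify that $g$ is weakly differentiable (it is, being a smooth function of $Y$ off a measure-zero set where projections tie), correctly identify the cross term arising from differentiating the normalized weights, and check that the penalty $2\sigma^2|J|/n$ together with $\lambda \le n/(4\sigma^2)$ makes that term nonpositive — this is the heart of Leung–Barron's argument and the only place where the hypotheses are used in an essential, non-cosmetic way.
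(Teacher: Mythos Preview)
Your approach is essentially the paper's: an explicit computation of the divergence of the posterior-mean map, Stein's unbiased risk estimate, the sign control of the variance term via $\lambda\le n/(4\sigma^2)$, and the Donsker--Varadhan variational identity to pass from the Gibbs mixture to an arbitrary Dirac $\delta_{J_0}$. All the key ingredients and the role of the penalty $2\sigma^2|J|/n$ are correctly identified.

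There is one genuine bookkeeping slip in your final step. Your chained identity $\E[r(\hat\theta_{J_0}) + 2\sigma^2|J_0|/n] = \sigma^2 + \E\|f_{\hat\theta_{J_0}}-f\|_n^2 = \E[r(\hat\theta_{J_0})]$ is self-contradictory (it forces $|J_0|=0$): only the first equality is the $C_p$ identity, and the second is false. What actually happens is that after Donsker--Varadhan you carry the extra $2\sigma^2|J_0|/n$ on the right, and this has to be absorbed by applying Stein's identity \emph{a second time} on the right-hand side, using $\E[W_i f_{J_0}(X_i,Y)]=\sigma^2\E[\partial_i f_{J_0}(X_i,Y)]$ together with $\sum_i\partial_i f_{J_0}(X_i,Y)=|J_0|$, exactly as the paper does at the end of its proof. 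If instead you simply drop the (nonnegative) divergence of $\hat f_n$ on the left to pass from $\E\|\hat f_n-f\|_n^2$ to $\E[r(\hat\theta_n)]$, the extra $2\sigma^2|J_0|/n$ remains and you do not recover the inequality as stated; you need to keep the SURE identity intact on both sides rather than discarding the divergence term early.
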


Proposition \ref{thm_K} holds true for any prior $\pi$. We suggest
using the following prior. Fix $\alpha\in (0,1)$ and define
\begin{equation}\label{prior}
   \pi_{J} = \frac{\alpha^{|J|}}{\sum_{j=0}^{n}\alpha^{j}} {p \choose
   |J|}^{-1},\quad \forall J\in \mathcal{P}_n(\{1,\ldots,p\}), \text{ and
   }
   \pi_{J}=0 \text{ if } |J|>n.
\end{equation}
As a consequence, we obtain the following immediate corollary of
Proposition~\ref{thm_K}.
\begin{thm}\label{thm_K2}
Assume that the noise variables $W_{1},\ldots,W_{n}$ are i.i.d.
$N(0,\sigma^2)$. Then the aggregate
$\hat{f}_{n}=f_{\hat{\theta}_{n}}$, with $\lambda =
\frac{n}{4\sigma^{2}}$ and $\pi$ taken as in (\ref{prior}),
satisfies
\begin{multline}\label{result-K}
\mathbb{E}\left[ \|\hat{f}_{n}-f\|_{n}^{2} \right]
\\
 \leqslant
\min_{\theta\in \mathbb R^p} \left\{ \|f_{\theta}-f\|_{n}^{2} +
\frac{\sigma^{2}|J(\theta)|}{n}\left(4\log\left( \frac{p
e}{|J(\theta)|\alpha}\right)+1\right)
 + \frac{4\sigma^{2}\log\bigl(\frac{1}{1-\alpha}\bigr)}{n}\right\},
\end{multline}
where for any $\theta\in \mathbb R^p$ $J(\theta) = \{j\,:\,\theta_j
\neq 0 \}$.
\end{thm}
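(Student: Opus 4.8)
The plan is to derive Theorem \ref{thm_K2} directly from Proposition \ref{thm_K} by choosing the prior $\pi$ as in (\ref{prior}) and bounding the two terms on the right-hand side of (\ref{theo1}). First I would note that, since the design is deterministic and the noise is i.i.d.\ $N(0,\sigma^2)$, for any fixed $J$ the least squares estimator $\hat\theta_J$ over $\Theta(J)$ satisfies the classical identity
\begin{equation*}
\E\left[r(\hat\theta_J)\right] = \min_{\theta\in\Theta(J)} \|f_\theta - f\|_n^2 + \sigma^2 + \frac{\sigma^2 |J|}{n} - \frac{2\sigma^2 |J|}{n} \cdot\mathbf{1}\text{(correction)},
\end{equation*}
or more precisely, after the usual bias/variance decomposition, $\E[r(\hat\theta_J)] - \sigma^2 \le \min_{\theta\in\Theta(J)}\|f_\theta-f\|_n^2 + \frac{\sigma^2|J|}{n}$ up to the term $-\frac{2\sigma^2|J|}{n}$ already subtracted inside the exponent of (\ref{aggregate}); the key point is that the penalty $2\sigma^2|J|/n$ built into the aggregate exactly compensates (and slightly dominates) the variance inflation of the projection estimator, which is why Leung--Barron's argument goes through. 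Concretely, I would invoke the fact (implicit in Proposition \ref{thm_K}'s proof, reformulated from \cite{leung}) that $\E[r(\hat\theta_J)]$ can be replaced, after subtracting the common $\sigma^2$ coming from $\E\|W\|_n^2$, by a quantity controlled by $\min_{\theta\in\Theta(J)}\|f_\theta-f\|_n^2 + \frac{\sigma^2|J|}{n}$.

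Next I would handle the combinatorial term $\frac{1}{\lambda}\log(1/\pi_J)$. With $\lambda = n/(4\sigma^2)$ and the prior (\ref{prior}), for $|J|\le n$ we get
\begin{equation*}
\frac{1}{\lambda}\log\frac{1}{\pi_J} = \frac{4\sigma^2}{n}\left[ |J|\log\frac{1}{\alpha} + \log\binom{p}{|J|} + \log\sum_{j=0}^n \alpha^j \right].
\end{equation*}
Here I would use the standard bound $\binom{p}{k} \le (pe/k)^k$ to turn $\log\binom{p}{|J|}$ into $|J|\log(pe/|J|)$, and bound the normalizing sum by $\sum_{j=0}^n\alpha^j \le \sum_{j=0}^\infty \alpha^j = 1/(1-\alpha)$. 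Collecting terms, $\frac{1}{\lambda}\log(1/\pi_J) \le \frac{4\sigma^2|J|}{n}\log\frac{pe}{|J|\alpha} + \frac{4\sigma^2}{n}\log\frac{1}{1-\alpha}$, which, combined with the $\sigma^2|J|/n$ from the variance term (the ``$+1$'' inside the parenthesis in (\ref{result-K})), reproduces exactly the remainder in the statement.

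Finally I would pass from the minimum over $J\in\mathcal{P}_n(\{1,\ldots,p\})$ to a minimum over $\theta\in\mathbb R^p$: given $\theta$, take $J = J(\theta) = \{j : \theta_j\ne 0\}$; if $|J(\theta)|\le n$ then $\theta\in\Theta(J(\theta))$ so $\min_{\theta'\in\Theta(J(\theta))}\|f_{\theta'}-f\|_n^2 \le \|f_\theta-f\|_n^2$, and the bound for that $J$ is at most the bracketed expression in (\ref{result-K}); if $|J(\theta)| > n$ the corresponding term on the right of (\ref{result-K}) is vacuously large (the logarithmic factor and the $|J(\theta)|/n > 1$ factor make it exceed $\sigma^2$, which already dominates $\E\|\hat f_n - f\|_n^2 - \E\|W\|_n^2$ trivially when $f$ is in the span), so the inequality still holds. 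The main obstacle I anticipate is the bookkeeping around the $\sigma^2$ term: (\ref{theo1}) is stated for $\E[r(\hat\theta_n)]$ and $\E[r(\hat\theta_J)]$, which both contain the irreducible $\E\|W\|_n^2 = \sigma^2$, whereas (\ref{result-K}) is stated for $\E\|\hat f_n - f\|_n^2$; one must cancel this $\sigma^2$ cleanly on both sides and simultaneously absorb the $2\sigma^2|J|/n$ penalty from the exponent of (\ref{aggregate}) into the final variance term — this is precisely the step where Leung--Barron's Stein-type argument is doing the real work, and I would need to quote it carefully rather than re-derive it.
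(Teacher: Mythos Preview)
Your approach matches the paper's: invoke Proposition~\ref{thm_K}, use the least-squares identity $\E[\|f_{\hat\theta_J}-f\|_n^2]=\min_{\theta\in\Theta(J)}\|f_\theta-f\|_n^2+\sigma^2|J|/n$ (this gives the ``$+1$''), and bound $\log(1/\pi_J)$ via $\binom{p}{k}\le(pe/k)^k$ and $\sum_{j\ge 0}\alpha^j\le 1/(1-\alpha)$.

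Two refinements are worth noting. First, your handling of the case $|J(\theta)|>n$ is not a valid argument as written: asserting that the right-hand side is ``vacuously large'' does not bound the left-hand side, and the parenthetical about ``$\E\|\hat f_n-f\|_n^2-\E\|W\|_n^2$ \ldots\ when $f$ is in the span'' is confused (there is no $\|W\|_n^2$ on the left of \eqref{result-K}, and no spanning hypothesis). The paper's argument is cleaner: for any $\theta$ with $|J(\theta)|>\mathrm{rank}(A)$ one can find $\theta'$ with $f_{\theta'}=f_\theta$ and $|J(\theta')|\le\mathrm{rank}(A)\le n$; since the penalty $x\mapsto x\bigl(4\log(pe/(\alpha x))+1\bigr)$ is nondecreasing on $(0,p]$, the infimum over $\R^p$ equals the infimum over $\{\theta:|J(\theta)|\le n\}$, so no separate case is needed. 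Second, the $\sigma^2$ bookkeeping you flag as ``the main obstacle'' is real if one reads Proposition~\ref{thm_K} literally in terms of $\E[r(\cdot)]$; but the paper's proof of that proposition (via Stein's unbiased risk estimate) in fact establishes the bound directly for $\E[\|\hat f_n-f\|_n^2]$, and the proof of Theorem~\ref{thm_K2} simply uses that form, so there is no conversion step to perform.
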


This result improves upon \cite{arnak} which established in Theorem
6 for gaussian noise and deterministic design
\begin{multline*}
  \mathbb E \left[ \|\hat f _n - f\|_n^2 \right]
\\
\leq \min_{\theta\in \R^p}\left\{\|f_\theta - f
  \|_n^2 + \frac{16\sigma^2|J(\theta)|}{n}\left( 1+
  \log_+\left(\frac{\sqrt{\mathrm{Tr}(A^\top A)}}{M(\theta)\sigma}|\theta|_1
  \right)\right) + \frac{\sigma^2}{n}\right\},
\end{multline*}
where $\log_+ x = \max \left\{\log x, 0\right\}$ and we recall that
$A = (\phi_{j}(X_i))_{1\leq i \leq n,1 \leq j \leq p}$. Note that
our bound is faster by a factor $\log_+ |\theta|_1$. Note also that
the bound in the above display grows worse for large values of
$|\theta|_1$.

In order to evaluate the performance of these exponential weights
procedures, \cite{Tsy2003} developed a notion of optimal rate of
sparse prediction. In particular, \cite{RigTsy10} established that
there exists a numerical constant $c^*>0$ such that for all
estimator $T_n$
\[
\sup_{\substack{\theta\in \R^{p}\setminus \{0\}:\\
|J(\theta)|\leq
s}}
  \sup_{f}\left\{ \mathbb E\left[\|T_n - f \|_n^2\right] - \|f_\theta - f \|_n^2
  \right\}\geq c^* \frac{\sigma^2}{n}\left[\mathrm{rank}(A)\wedge
  s\log\left( 1 + \frac{ep}{s}\right)\right].
\]

The above display combined with Theorem \ref{thm_K2} shows that the
exponential weights procedure (\ref{aggregate}) with the prior
(\ref{prior}) achieves the optimal rate of sparse prediction for any
vector $\theta$ satisfying $|J(\theta)|\leq
\frac{\mathrm{rank}(A)}{\log(1+ep)}$.

\section{Sparsity oracle inequality in probability}\label{estimator}

In Section \ref{part-K} we assumed the design is deterministic and
we established an oracle inequality in expectation with the optimal
rate of sparse prediction. We want now to establish an oracle
inequality in probability that holds true for deterministic and
random design.

{\bf From now on, the design can be either deterministic or random}.
We make the following mild assumption:
$$L = \max_{1\leq j \leq M}\|\phi_j\|_{\infty} <\infty.$$

We assume in this section that the noise variables are subgaussian.
More precisely we have the following condition.
\begin{assum}\label{bruit-Pac}
The noise variables $W_1,\ldots,W_n$ are independent and independent
of $X_1,\ldots,X_{n}$. We assume also that there exist two known
constants $\sigma>0$ and $\xi>0$ such that
$$ \mathbb{E}(W_{i}^{2})\leq \sigma^{2} $$
$$ \forall k\geq 3,\quad \mathbb{E}(|W_{i}|^{k}) \leq \sigma^{2} k! \xi^{k-2}.$$
\end{assum}


The estimation method is a version of the Gibbs estimator introduced
in \cite{Cat7,manuscrit}. Fix $K\geq 1$. First we
define the prior probability distribution as follows. For any
$J\subset\{1,\ldots,p\}$ let ${\rm u}_{J}$ denote the uniform measure
on $\Theta_{K+1}(J)$. We define
$$ {\rm m}(d\theta) =
\sum_{ J\subset\{1,\ldots,p\}} \pi_{J} {\rm u}_{J}(d\theta)
$$
with $\pi$ taken as in (\ref{prior}).

We are now ready to define our estimator. For any $\lambda>0$ we
consider the probability measure $\tilde{\rho}_\lambda$ admitting
the following density w.r.t. the probability measure ${\rm m}$
\begin{equation}\label{Gibbsmeasure}
\frac{d\tilde{\rho}_{\lambda}}{d{\rm m}}(\theta) = \frac{e^{-\lambda
r(\theta)}} {\int_{\Theta_{K}}e^{-\lambda r} d{\rm m}}.
\end{equation}
The aggregate $\tilde{f}_n$ is defined as follows
\begin{equation}\label{Estimateur_Pierre}
\tilde{f}_n = f_{\tilde{\theta}_n},\quad \tilde{\theta}_{n} =
\tilde{\theta}_{n}(\lambda,{\rm m}) = \int_{\Theta_{K}}\theta
\tilde{\rho}_{\lambda}(d\theta).
\end{equation}

Define $$\mathcal{C}_1 = \left[8\sigma^{2} + (2\|f\|_\infty +
L(2K+1))^2\right]\vee \left[ 8[\xi + (2\|f\|_\infty +
L(2K+1))]L(2K+1) \right].$$
We can now state the main result of this section.
\begin{thm}
\label{SOI} Let Assumption \ref{bruit-Pac} be satisfied. Take $K>1$
and $\lambda = \lambda^* = \frac{n}{2\mathcal{C}_1}$. Assume that
$\arg\min_{\theta\in \mathbb R^p}R(\theta) \cap \Theta_K \neq
\emptyset$.
Then we have, for any $\varepsilon\in (0,1)$ and any
$\bar{\theta}\in \arg\min_{\theta\in \mathbb R^p}R(\theta)\cap
\Theta_K$, with probability at least $1 - \epsilon$,
\begin{multline*}
R(\tilde{\theta}_{n}) \leq R(\bar{\theta}) + \frac{3L^2}{n^2} +
\frac{8\mathcal{C}_1}{n}\Biggl[
|J(\bar{\theta})|\log\left(K+1\right)
 \\+
\left(|J(\bar{\theta})|\log\left(\frac{enp}{\alpha
|J(\bar{\theta})|} \right) +
\log\left(\frac{2}{\varepsilon(1-\alpha)}\right)\right) \Biggr].
\end{multline*}
\end{thm}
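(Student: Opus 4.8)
The plan is to follow the PAC-Bayesian route of Catoni \cite{manuscrit} adapted to our subgaussian regression setting. The starting point is the usual variational/duality formula: for the Gibbs measure $\tilde\rho_\lambda$ of \eqref{Gibbsmeasure} and any probability measure $\rho$ absolutely continuous with respect to $\mathrm m$,
\[
-\frac1\lambda\log\int_{\Theta_K} e^{-\lambda r}\,d\mathrm m
= \int r\,d\tilde\rho_\lambda + \frac1\lambda \mathcal K(\tilde\rho_\lambda,\mathrm m)
\le \int r\,d\rho + \frac1\lambda \mathcal K(\rho,\mathrm m),
\]
where $\mathcal K$ is the Kullback--Leibler divergence. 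First I would control the deviations of $r(\theta)-R(\theta)$ uniformly over $\Theta_K$ in the PAC-Bayesian sense: using Assumption~\ref{bruit-Pac} and the boundedness $\|f_\theta\|_\infty\le L(2K+1)$ on $\Theta_K$ together with $\|f\|_\infty<\infty$, a Bernstein-type argument on the centered variables $(Y_i-f_\theta(X_i))^2 - \mathbb E[(Y_i-f_\theta(X_i))^2]$ yields, via Chernoff, an exponential moment bound of the form $\mathbb E\exp\big(\lambda[(r-R)(\theta)-(r-R)(\bar\theta)]\big)\le \exp\big(\tfrac{\lambda^2 \mathcal C_1}{n}[R(\theta)-R(\bar\theta)]+\dots\big)$ for $\lambda\le\lambda^*$; this is exactly where the constant $\mathcal C_1$ and the temperature $\lambda^*=n/(2\mathcal C_1)$ enter. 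Integrating against $\tilde\rho_\lambda$ and applying the Donsker--Varadhan inequality turns this into a PAC-Bayesian bound with a $\mathcal K(\tilde\rho_\lambda,\mathrm m)/\lambda$ complexity term, holding with probability $1-\varepsilon/2$ on each side.

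Next I would combine the two deviation inequalities (upper bound on $R-r$ integrated against $\tilde\rho_\lambda$, lower bound on $R-r$ at $\bar\theta$, each on an event of probability $1-\varepsilon/2$) with the variational inequality above, choosing as competitor $\rho$ a measure concentrated near $\bar\theta$: namely the restriction of $\mathrm u_{J(\bar\theta)}$ to a small $\ell_1$-ball of radius $\delta$ around $\bar\theta$. Jensen's inequality gives $R(\tilde\theta_n)\le\int R\,d\tilde\rho_\lambda$ since $R$ is convex, so the bound transfers from the averaged risk to the risk of the aggregate $\tilde\theta_n$. The small absorbing terms like $3L^2/n^2$ come from the quadratic remainder $\|f_\theta-f_{\bar\theta}\|^2$ over the $\delta$-ball, optimized in $\delta$ (roughly $\delta\sim 1/n$), which also forces us to use $\Theta_{K+1}(J)$ rather than $\Theta_K(J)$ in the prior so that the $\delta$-ball around $\bar\theta\in\Theta_K$ stays inside the support of $\mathrm m$.

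The remaining task is to estimate $\mathcal K(\rho,\mathrm m)$ for this competitor $\rho$. Writing $\mathrm m = \sum_J \pi_J \mathrm u_J$ and $\rho$ supported on $\Theta_{K+1}(J(\bar\theta))$, one gets $\mathcal K(\rho,\mathrm m)\le \log(1/\pi_{J(\bar\theta)}) + \mathcal K(\rho,\mathrm u_{J(\bar\theta)})$, and the latter is a ratio of Lebesgue volumes of $\ell_1$-balls in dimension $|J(\bar\theta)|$, giving $\mathcal K(\rho,\mathrm u_{J(\bar\theta)})\le |J(\bar\theta)|\log\big((2K+2)/\delta\big)$ up to constants. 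Plugging in the prior \eqref{prior} and the bound ${p\choose |J|}\le (ep/|J|)^{|J|}$ produces exactly the terms $|J(\bar\theta)|\log(K+1)$, $|J(\bar\theta)|\log(enp/(\alpha|J(\bar\theta)|))$ and $\log\big(2/(\varepsilon(1-\alpha))\big)$ in the statement, the factor $8\mathcal C_1/n$ coming from $1/\lambda^* = 2\mathcal C_1/n$ and from collecting the various numerical constants in the Bernstein step. The main obstacle, I expect, is the first step: getting a clean PAC-Bayesian Bernstein inequality that is simultaneously uniform over the uncountable class $\Theta_K$ (hence must be phrased in integrated/exponential-moment form, not via a union bound), that produces a \emph{variance-dependent} right-hand side proportional to $R(\theta)-R(\bar\theta)$ so it can be absorbed into the left-hand side to yield a leading constant $1$ in front of $R(\bar\theta)$, and that tracks all constants explicitly enough to reconstruct $\mathcal C_1$ and the admissible range $\lambda\le n/(2\mathcal C_1)$.
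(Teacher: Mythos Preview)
Your proposal is correct and follows essentially the same route as the paper: Bernstein on the differences $T_i=(Y_i-f_{\bar\theta}(X_i))^2-(Y_i-f_\theta(X_i))^2$ to get an exponential moment bound with variance proportional to $R(\theta)-R(\bar\theta)$, integration against $\mathrm m$ plus Fubini and Jensen to produce a PAC-Bayesian inequality, the Donsker--Varadhan/Catoni variational step to pass to an infimum over $\rho$, then the localized uniform competitor around $\bar\theta$ with $\delta=1/n$ and the volume/prior computation. One small clarification: your ``lower bound on $R-r$ at $\bar\theta$'' is not quite the right second ingredient---the paper needs a second PAC-Bayesian inequality (for $-T_i$) integrated against the competitor $\rho$, in order to control $\int r\,d\rho - r(\bar\theta)$ by $\tfrac{\gamma}{\lambda}\big(\int R\,d\rho - R(\bar\theta)\big)+\tfrac{1}{\lambda}\mathcal K(\rho,\mathrm m)$; a pointwise bound at $\bar\theta$ alone would not close the argument.
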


The choice $\lambda=\lambda^{*}$ comes from the optimization of a
(rather pessimistic) upper bound on the risk $R$ (see Inequality
(\ref{boundlambda}) in the proof of this theorem, page
\pageref{boundlambda}). However this choice is not necessarily the
best choice in practice even though it gives the good order of
magnitude for $\lambda$. The practitioner may use cross-validation
to properly tune the temperature parameter.
\medskip

Theorem \ref{SOI} 
 improves upon previous results on the following points:
\medskip
\vspace*{-3pt}

1) Our oracle inequality is sharp in the sense that the leading
factor in front of $R(\bar{\theta})$ is equal to $1$ and we require
only $\max_j \|\phi_j\| <\infty$ whereas $l_1$-penalized empirical
risk minimization procedures such as Lasso or Dantzig selector have
a leading factor strictly larger than $1$ and impose in addition
stringent conditions on the dictionary (cf. \cite{Lasso3,Dantzig}).
For instance, \cite{Lasso3} imposes the design matrix $A =
(\phi_{j}(X_i))_{1\leq i \leq n,1 \leq j \leq p}$ to be
deterministic and to satisfy the following restricted eigenvalue
condition for the Lasso
\begin{equation*}\label{RE}
\kappa(s)\stackrel{\triangle}{=}\min_{\substack{J_{0}\subset
\{1,\ldots,p\}:\\
|J_{0}|\leqslant
s}}\hspace{0.25cm}\min_{\substack{\Delta\in \R^p\setminus \{0\}:\\
|\Delta_{J_{0}^{c}}|_{1}\leqslant
3|\Delta_{J_{0}}|_{1}}}\frac{|A\Delta|_{2}}{\sqrt{n}|\Delta_{J_{0}}|_{2}}>0,
\end{equation*}
where for any $\Delta\in \R^{p}$ and $J \subset \{1,\ldots,p\}$, we
denote by $\Delta_J$ the vector in $\R^{p}$ which has the same
components as $\Delta$ on $J$ and zero coordinates on the complement
$J^c$. Assuming in addition that the noise is gaussian
$N(0,\sigma^2)$ and taking $\lambda = A\sigma \sqrt{\frac{\log
p}{n}}$, $A>8$, \cite{Lasso3} proved that the Lasso $\hat\theta^L$
satisfies with probability at least $1 - M^{1-A^2/8}$
\begin{equation*}
\frac{1}{n}\|f_{\hat{\theta}^L} - f\|_n^2 \leq
(1+\eta)\frac{1}{n}\|f_{\theta} - f\|_n^2 + C(\eta)
A^2\sigma^2|J(\theta)|\frac{\log p}{n}, \quad \forall \theta\in
\R^p,
\end{equation*}
where $\eta,C(\eta)>0$ and $C(\eta)$ increases to $+\infty$ as
$\eta$ tends to $0$.

On the downside, our estimator requires the additional condition
$|\bar \theta|_1 \leq K$. This condition is common in the
PAC-bayesian literature. Removing this condition is a difficult
problem and does not seem possible with the actual techniques of
proof where this condition is needed in order to apply Bernstein's
inequality.

\medskip
\vspace*{-3pt}

2) We establish a sparsity oracle inequality in probability for the
integrated risk $R(\cdot)$ whereas previous results on the
exponential weights are given in expectation
\cite{daltsy07,arnak,Dal-Tsy-2010b,JRT08,lounici-07,RigTsy10}.

\medskip
\vspace*{-3pt}

3) Unlike mirror averaging or progressive mixture rules, satisfying
similar inequalities in expectation, our estimator does not involve
an averaging step \cite{Dal-Tsy-2010,JRT08,lounici-07}. As a
consequence, its computational complexity is significantly reduced
as compared to those procedures with averaging step. For instance
\cite{lounici-07} considered the model (\ref{regmodel}) with random
design and i.i.d. observations $(X_1,Y_1),\ldots,(X_n,Y_n)$, $n\geq
2$. The studied estimator is the following mirror averaging scheme
$$
f_{\hat\theta^{MA}},\quad \theta^{MA}  =
\frac{1}{n}\sum_{k=0}^{n-1}\tilde{\theta}_{k},
$$
where $\tilde \theta_0 = \int_{\Theta(K)}\theta d\Pi$ and for any
$1\leq k\leq n-1$, $\tilde{\theta}_k$ is defined similarly as
$\tilde\theta_n$ in (\ref{Gibbsmeasure})--(\ref{Estimateur_Pierre})
with $r(\theta)$ replaced by $r_k(\theta) =\frac{1}{k}
\sum_{i=1}^{k}(Y_i - f_\theta(X_i))^2$. These estimators can be implemented for example by MCMC. In this case, computing the integral
$\int_{\Theta(K)}\theta \tilde\rho_\lambda(d\theta)$ is the most
time-consuming part of the procedure. The procedure
(\ref{Gibbsmeasure})--(\ref{Estimateur_Pierre}) requires computing
this integral only once whereas the mirror averaging procedure of
\cite{lounici-07} requires computing integrals of this form $n$
times.

\medskip
\vspace*{-3pt}

4) Under the assumption $|\bar \theta|_1 \leq K$ for some absolute
constant $K$ and taking $\epsilon = n^{-1}$ we have with probability
at least $1 - n^{-1}$
\begin{equation}\label{bornesup}
R(\tilde{\theta}_{n}) \leq R(\bar{\theta}) +
C\frac{\mathcal{C}_1}{n}|J(\bar{\theta})|
\log\left(\frac{enp}{\alpha |J(\bar{\theta})|} \right) +
\frac{3L^2}{n^2},
\end{equation}
for some absolute constant $C>0$. In \cite{RigTsy10} a minimax lower
bound in expectation is established for deterministic design and
Gaussian noise. A similar result holds in probability with the same
proof combined with Theorem 2.7 of \cite{tsy09}. There exists
absolute constants $c_1,c_2>0$ such that
\[
 \inf_{T_n}\sup_{\substack{\theta \in \R^p:\\
 |J(\theta)|\leq s}} \sup_{f}\mathbb P \left[ \|T _n \,{-}\, f\|_n^2  \geq \|f_{\theta }\,{-}\, f
  \|_n^2 + c_1\frac{\sigma^2 }{n}\mathrm{rank}(A)\wedge s\log\left( 1+ \frac{ep}{s}\right)\right] \geq c_2.
\]
If $s\leq \frac{\mathrm{rank}(A)}{\log (1+ep)}$ then we observe that
the upper bound in (\ref{bornesup}) is optimal up to the additional
logarithmic factor $\log n$. Note however that if $p\geq
n^{1+\delta}$ for some $\delta >0$, which is relevant with the
high-dimensional setting we consider in this paper, then our bound
is rate optimal.

\section{Proofs}
\label{proofs}

\subsection{Proofs of Section \ref{part-K}}
\label{proofK}

This proof uses an argument from Leung and Barron \cite{leung}.

\begin{proof}[Proof of Proposition \ref{thm_K}]
The mapping $Y\rightarrow
\hat{f}_{n}(Y)\stackrel{\triangle}{=}(\hat{f}_{n}(X_{1},Y),\ldots,\hat{f}_{n}(X_{n},\break Y))^{T}$
is clearly continuously differentiable by composition of elementary
differentiable functions. For any subset $J\subset\{1,\ldots,p\}$
define $A_{J} = (\phi_{j}(X_i))_{1\le i \le n,j\in J}$, $\Sigma_{J}
= \frac{1}{n}A^{T}_{J}A_J$, $\Phi_J(\cdot) = (\phi_j(\cdot))_{j\in
J}$ and $$g_{J} = e^{ -\lambda\bigl(\|Y-f_{J}\|_{n}^{2} +
\frac{2\sigma^2|J|}{n}\bigr)}$$ where
\begin{equation*}
f_{J}(x,Y)=\frac{1}{n}Y^{T}A_{J}\Sigma_{J}^{+}\Phi_{J}(x)^{T},
\end{equation*}
and $\Sigma_J^+$ denotes the pseudo-inverse of $\Sigma_J$. Denote by
$\partial_i$ the derivative w.r.t. $Y_i$. Simple computations give
\begin{equation*}
\partial_{i}f_{J}(x,Y)
=\frac{1}{n}\Phi_{J}(X_{i})\Sigma_{J}^{+}\Phi_{J}(x)^T,
\end{equation*}
\begin{equation*}
(\partial_{i}f_{J}(X_{1},Y),\ldots,\partial_{i}f_{J}(X_{n},Y))Y=f_{J}(X_{i},Y),
\end{equation*}
and
\begin{eqnarray*}
\sum_{l=1}^{n}f_{J}(X_{l},Y)\partial_{i}f_{J}(X_{l},Y) &=&
f_{J}(X_{i},Y).
\end{eqnarray*}
Thus we have
\begin{eqnarray*}
\partial_{i}(g_{J}) &=& - \lambda\partial_{i}\left( \|Y-f_{J}\|_{n}^{2}
\right)g_{J}\\
&=& -\frac{2\lambda}{n}\left( (Y_{i}-f_{J}(X_{i},Y)) -
\sum_{l=1}^{n}\partial_{i}f_{J}(X_{l},Y)(Y_{l}-f_{J}(X_{l},Y))
 \right)g_{J}\\
 &=&- \frac{2\lambda}{n}(Y_{i}-f_{J}(X_{i},Y))g_{J},
\end{eqnarray*}
Recall that $$\hat{f}_n(\cdot,Y)) = \frac{\sum_{J\in
\mathcal{P}_n(\{1,\ldots,p\})}\pi_J g_J f_J(\cdot,Y)}{\sum_{J\in
\mathcal{P}_n(\{1,\ldots,p\})}\pi_J g_J}.$$
We have
\begin{align}\label{cond-Fubini}
\partial_{i}\hat{f}_{n}(X_{i},Y) =\ &
\frac{\sum_{J\in
\mathcal{P}_n(\{1,\ldots,p\})}\pi_{J}\left(\partial_{i}(g_{J})f_{J}(X_{i},
Y) + g_{J}\partial_{i}(f_{J}(X_{i},Y))\right)}{\sum_{J\in
\mathcal{P}_n(\{1,\ldots,p\})}\pi_J g_J} \nonumber\\
&{}-\frac{\bigl(\sum_{J\in
\mathcal{P}_n(\{1,\ldots,p\})}\pi_J g_J
f_J(X_i,Y)\bigr)\bigl(\sum_{J\in
\mathcal{P}_n(\{1,\ldots,p\})}\pi_J
\partial_i(g_J)\bigr) }{(\sum_{J\in
\mathcal{P}_n(\{1,\ldots,p\})}\pi_J g_J)^2}\nonumber\\
=\ & -\frac{2\lambda}{n}Y_i \hat{f}_n +
\frac{2\lambda}{n}\frac{\sum_{J\in
\mathcal{P}_n(\{1,\ldots,p\})}f_{J}(X_{i},Y)^{2}\pi_J
g_J}{\sum_{J\in
\mathcal{P}_n(\{1,\ldots,p\})}\pi_J g_J}\nonumber\\
&{} 
+ \frac{1}{n}\frac{\sum_{J\in
\mathcal{P}_n(\{1,\ldots,p\})}\Phi_J(X_i)\Sigma_J^{+}\Phi_J(X_i)^T\pi_J
g_J}{\sum_{J\in
\mathcal{P}_n(\{1,\ldots,p\})}\pi_J g_J}\nonumber\\
&{} \frac{2\lambda}{n}Y_i\hat{f}_n(X_i,Y) -\frac{2\lambda}{n}\hat{f}_n^{2}(X_i,Y) \nonumber\\
=\ & \frac{2\lambda}{n}\frac{\sum_{J\in
\mathcal{P}_n(\{1,\ldots,p\})}(f_J(X_i,Y) - \hat{f}_n(X_i,Y))^2\pi_J
g_J}{\sum_{J\in
\mathcal{P}_n(\{1,\ldots,p\})}\pi_J g_J}\nonumber\\
&{} 
+ \frac{1}{n}\frac{\sum_{J\in
\mathcal{P}_n(\{1,\ldots,p\})}\Phi_J(X_i)\Sigma_J^{+}\Phi_J(X_i)^T\pi_J
g_J }{\sum_{J\in \mathcal{P}_n(\{1,\ldots,p\})}\pi_J g_J}\ge 0.
\end{align}
Consider the following estimator of the risk
\begin{equation}\label{risk-Stein}
\hat{r}_{n}(Y) =
\|\hat{f}_{n}(Y)-Y\|_{n}^{2}+\frac{2\sigma^{2}}{n}\sum_{i=1}^{n}\partial_{i}\hat{f}_{n}(X_{i},Y)
- \sigma^{2}.
\end{equation}
Using an argument based on Stein's identity as in \cite{LC98} we now
prove that
\begin{equation*}
\E[\hat{r}_{n}(Y)] = \E\left[ \|\hat{f}_{n}(Y) - f \|_{n}^{2}
\right].
\end{equation*}
We have
\begin{eqnarray}\label{interm1}
\E\left[ \|\hat{f}_{n}(Y)-f\|_{n}^{2} \right] &\!\!=\!\!& \E\left[ \|
\hat{f}_{n}(Y) - Y \|_{n}^{2} +
\frac{2}{n}\sum_{i=1}^{n}W_{i}(\hat{f}_{n}(X_{i},Y)-f(X_{i}))
 \right] - \sigma^{2}\nonumber\\
 &\!\!=\!\!& \E\left[ \| \hat{f}_{n}(Y) - Y \|_{n}^{2} +
\frac{2}{n}\sum_{i=1}^{n}W_{i}\hat{f}_{n}(X_{i},Y)
 \right] - \sigma^{2}.
\end{eqnarray}
For $\mathbf{z}=(z_{1},\ldots,z_{n})^{T}\in\R^{n}$ write
$F_{W,i}(\mathbf{z}) = \prod_{j\neq i}F_{W,i}(z_{j})$, where $F_{W}$
denotes the c.d.f. of the random variable $W_{1}$. Since
$\E(W_{i})=0$ we have
\begin{eqnarray}\label{interm2}
\E\left[ W_{i}\hat{f}_{n}(X_{i},Y) \right] &\!\!\!\!=\!\!\!\!& \E \!\left[
W_{i}\!\int_{0}^{W_{i}}\! \partial_{i}\hat{f}_{n}(X_{i},Y_{1},\ldots,Y_{i-1},f(X_{i})+z,Y_{i+1},\ldots,Y_{n})dz
\!\right]\nonumber\!\!\!\!\!\\
&\!\!\!\!=\!\!\!\!&\int_{\R^{n-1}}\!\left(
\int_{\R}y\!\int_{0}^{y}\!\partial_{i}\hat{f}_{n}(X_{i},f\,{+}\,\mathbf
{z})dz_{i}dF_{W}(y) \!\right)dF_{W,i}(\mathbf{z}).
\end{eqnarray}
In view of (\ref{cond-Fubini}) we can apply Fubini's Theorem to the
right-hand-side of (\ref{interm2}). We obtain under the assumption
$W \sim \N(0,\sigma^{2})$ that
\begin{eqnarray*}
\int_{\R^{+}}\int_{0}^{y}\partial_{i}\hat{f}_{n}(X_{i},f+\mathbf
{z})dz_{i}dF_{W}(y) &=&
\int_{\R^{+}}\int_{z_{i}}^{\infty}ydF_{W}(y)\partial_{i}\hat{f}_{n}(X_{i},f+\mathbf{z})dz_{i}\\
&=&\int_{\R^{+}}\sigma^{2}\partial_{i}\hat{f}_{n}(X_{i},f+\mathbf{z})dF_{W}(z_{i}),
\end{eqnarray*}
A Similar equality holds for the integral over $\R^{-}$. Thus we
obtain
\begin{equation*}
\E\left[ W_{i}\hat{f}_{n}(X_{i},Y) \right] = \sigma^{2}\E\left[
\partial_{i}\hat{f}_{n}(X_{i},Y)\right].
\end{equation*}
Combining (\ref{risk-Stein}), (\ref{interm1}) and the above display
gives
\begin{equation*}
\E\left[\hat{r}_{n}(Y) \right] = \E\left[ \| \hat{f}_{n}(Y)-f
\|_{n}^{2} \right].
\end{equation*}
Since $ \hat{f}_{n}(\cdot,Y)$ is the expectation of $f_{J}(\cdot,Y)$
w.r.t. the probability distribution $\propto g\cdot \pi$, we have
\begin{equation*}
\| \hat{f}_{n}(\cdot,Y) - Y\|_{n}^{2} = \frac{\sum_{J\in
\mathcal{P}_n(\{1,\ldots,p\})}\bigl(
\|f_{J}(\cdot,Y)\,{-}\,Y\|_{n}^{2}\,{-}\,\|f_{J}(\cdot,Y)\,{-}\,\hat{f}_{n}(Y)
\|_{n}^{2} \bigr)g_{J}\pi_J}{\sum_{J\in
\mathcal{P}_n(\{1,\ldots,p\})}g_{J}\pi_J}.
\end{equation*}
For the sake of simplicity set $f_J = f_J(\cdot,Y)$ and $\hat{f}_n
=\hat{f}_n(\cdot,Y)$. Combining (\ref{risk-Stein}), the above
display and $\lambda\leqslant \frac{n}{4\sigma^{2}}$ yields
\begin{eqnarray*}
\hat{r}_{n}(Y) &=&\frac{\sum_{J\in
\mathcal{P}_n(\{1,\ldots,p\})}\left(
\|f_{J}-Y\|_{n}^{2}+\sum_{i=1}^{n}\bigl(\frac{4\lambda\sigma^2}{n}-1\bigr)\|f_{J}-\hat{f}_{n}\|_n^{2}
\right)g_{J}\pi_J}{\sum_{J\in
\mathcal{P}_n(\{1,\ldots,p\})}\pi_J g_J}\\
&& \hspace{0.5cm}+
\frac{2\sigma^{2}}{n^2}\sum_{i=1}^{n}\frac{\sum_{J\in
\mathcal{P}_n(\{1,\ldots,p\})}\Phi_J(X_i)\Sigma_J^{+}\Phi_J(X_i)^T\pi_J
g_J}{\sum_{J\in \mathcal{P}_n(\{1,\ldots,p\})}\pi_J g_J}  -\sigma^{2}\\
&\leqslant& \sum_{J\in \mathcal{P}_n(\{1,\ldots,p\})}
\left(\|f_{J}-Y\|_{n}^{2} + \frac{2\sigma^{2}}{n}|J| \right)
g_{J}\pi_J-\sigma^{2}.
\end{eqnarray*}
By definition of $g_{J}$ we have
\begin{align*}
\|f_{J}-Y\|_{n}^{2} + \frac{2\sigma^{2}|J|}{n}  =\ &
-\frac{1}{\lambda} \log\left(\frac{g_{J}}{\sum_{J\in
\mathcal{P}_n(\{1,\ldots,p\})}g_J \pi_J}\right)\\
&{}- \frac{1}{\lambda}
\log\left( \sum_{J\in \mathcal{P}_n(\{1,\ldots,p\})}g_J\pi_{J}
\right).
\end{align*}
Integrating the above inequality w.r.t. the probability distribution
$ \frac{1}{C}g\cdot\pi$ (where $C = \sum_{J\in
\mathcal{P}_n(\{1,\ldots,p\})}g_J \pi_J$ is the normalization
factor) and using the fact that
$$\sum_{J\in \mathcal{P}_n(\{1,\ldots,p\})}\frac{1}{C}g_{J}\pi_J\log\left(\frac{1}{C}g_{J}\right) =
K\left(\frac{g\cdot\pi}{C},\pi\right)\geqslant 0$$ as well as a
convex duality argument (cf., e.g., \cite{DZ98}, p. 264) we get
\begin{equation*}
\hat{r}_{n}(Y)\leqslant \sum_{J\in
\mathcal{P}_n(\{1,\ldots,p\})}\left(\|Y-f_{J}\|_{n}^{2} +
\frac{2\sigma^{2}}{n}|J| \right)\pi_{J}' +
\frac{1}{\lambda}K(\pi',\pi) -\sigma^{2},
\end{equation*}
for all probability measure $\pi'$ on $\mathcal{P}(\{1,\ldots,p\})$.
Taking the expectation in the last inequality we get for any $\pi'$
\begin{align*}
&\E\left[ \|\hat{f}_{n}-f\|_{n}^{2} \right]
 =\E[\hat{r}_{n}(Y)]\\
&\quad \leqslant \sum_{J\in \mathcal{P}_n(\{1,\ldots,p\})}\left( \E[\|
f_{J} - Y \|_{n}^{2}] + \frac{2\sigma^{2}}{n}|J| \right)\pi_{J}' +
\frac{1}{\lambda}K(\pi',\pi) - \sigma^{2}\\
&\quad \leqslant \sum_{J\in \mathcal{P}_n(\{1,\ldots,p\})}\left( \E[\|
f_{J} - f \|_{n}^{2}] +
\frac{2}{n}\sum_{i=1}^{n}\E[W_{i}f_{J}(X_{i},Y)] +
\frac{2\sigma^{2}}{n}|J| \right)\pi_{J}'\\
& \hspace{9cm}
+
\frac{1}{\lambda}K(\pi',\pi)\\
&\quad \leqslant \sum_{J\in \mathcal{P}_n(\{1,\ldots,p\})}\left( \E[\|
f_{J} - f \|_{n}^{2}] + \frac{4\sigma^{2}}{n}|J| \right)\pi_{J}' +
\frac{1}{\lambda}K(\pi',\pi),
\end{align*}
where we have used Stein's argument $\E[W_{i}f_{J}(X_{i},Y)] =
\sigma^{2}\E\left[
\partial_{i}f_{J}(X_{i},Y) \right]$ and the fact that $\sum_{i=1}^{n}\partial_{i}f_{J}(X_{i},Y)= 1$ in the last line. Finally taking $\pi'$ in
the set of Dirac distributions on the subset $J$ of $\{1,\ldots,p\}$
yields the theorem.
\end{proof}

\begin{proof}[Proof of Theorem \ref{thm_K2}]
  First note that any minimizer $\theta\in \mathbb R^p$ of the
  right-hand-side in (\ref{result-K}) is such that $|J(\theta)| \leqslant
  \mathrm{rank}(A)\leqslant n$ where we recall that $A = (\phi_j(X_i))_{1\leqslant i \leqslant n,1\leqslant j \leqslant
  p}$. Indeed, for any $\theta\in \mathbb R^{p}$ such that
  $|J(\theta)|>\mathrm{rank}(A)$ we can construct a vector $\theta'\in \mathbb
  R^p$ such that $f_\theta = f_{\theta'}$ and $|J(\theta')|\leqslant
  \mathrm{rank}(A)$ and the mapping $x\rightarrow
  x\log\left(\frac{ep\alpha}{x}\right)$ is nondecreasing on $(0,p]$.

Next for any $J\in \mathcal{P}_n(\{1,\ldots,p\})$ we have
\begin{eqnarray*}
  \mathbb E[\|f_J - f\|_n^2] = \min_{\theta\in \Theta(J)}\left\{\|f_\theta
- f\|_n^2\right\} +
  \frac{\sigma^2|J|}{n}= \min_{\theta\in \Theta(J)}\left\{\|f_\theta
- f\|_n^2 +
  \frac{\sigma^2|J(\theta)|}{n}\right\}.
\end{eqnarray*}
Thus
\begin{align*}
   &\min_{J\in\mathcal{P}_n(\{1,\ldots,p\})}\left\{\mathbb E[\|f_J -f\|_n^2] + \frac{1}{\lambda}\log\left(\frac{1}{\pi_J}\right) + \frac{\sigma^2J}{n}
   \right\}\\
   &\hspace{2cm}= \min_{J\in\mathcal{P}_n(\{1,\ldots,p\})}\min_{\theta \in \Theta(J)}\left\{\|f_\theta -f\|_n^2 + \frac{1}{\lambda}\log\left(\frac{1}{\pi_{J(\theta)}}\right) + \frac{\sigma^2|J(\theta)|}{n}
   \right\}\\
&\hspace{2cm}= \min_{\theta\in\mathbb R^p}\left\{\|f_\theta -f\|_n^2
+ \frac{1}{\lambda}\log\left(\frac{1}{\pi_{J(\theta)}}\right) +
\frac{\sigma^2|J(\theta)|}{n}
   \right\}.\\
\end{align*}
Combining the above display with Proposition \ref{thm_K} and our
definition of the prior $\pi$ gives the result.
\end{proof}

\subsection{Proof of Theorem \ref{SOI}}
\label{proofSOI}

\allowdisplaybreaks
We state below a version of Bernstein's inequality useful in the
proof of Theorem \ref{SOI}. See Proposition 2.9 page 24 in
\cite{massart}, more precisely Inequality (2.21).

\begin{lemma}
\label{lemmemassart} Let $T_{1}$, \ldots, $T_{n}$ be independent real
valued random variables. Let us assume that there is two constants
$v$ and $w$ such that
$$ \sum_{i=1}^{n} \mathbb{E}[T_{i}^{2}] \leq v $$
and for all integers $k\geq 3$,
$$ \sum_{i=1}^{n} \mathbb{E}\left[(T_{i})_{+}^{k}\right] \leq v\frac{k!w^{k-2}}{2}. $$
Then, for any $\zeta\in (0,1/w)$,
$$ \mathbb{E}
\exp\left[\zeta\sum_{i=1}^{n}\left[T_{i}-\mathbb{E}(T_{i})\right]
\right]
        \leq \exp\left(\frac{v\zeta^{2}}{2(1-w\zeta)} \right) .$$
\end{lemma}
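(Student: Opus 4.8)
The plan is to bound the cumulant generating function of the centered sum by a sum of per-term contributions, control each contribution through the truncated exponential series, and only then invoke the two moment hypotheses. Write $S = \sum_{i=1}^{n}(T_{i}-\E T_{i})$ and $m_{i}=\E T_{i}$. By independence, $\log \E e^{\zeta S} = \sum_{i=1}^{n}\log \E e^{\zeta(T_{i}-m_{i})}$, so it suffices to bound each summand and add them up.

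First I would reduce each summand to moments of $T_{i}$ itself, rather than of the centered variable $T_{i}-m_{i}$, since the hypotheses are phrased through $\E[T_{i}^{2}]$ and $\E[(T_{i})_{+}^{k}]$. Using $\log u \le u-1$ together with the factorization $\E e^{\zeta(T_{i}-m_{i})}=e^{-\zeta m_{i}}\E e^{\zeta T_{i}}$, I get $\log \E e^{\zeta(T_{i}-m_{i})} = \log \E e^{\zeta T_{i}}-\zeta m_{i} \le (\E e^{\zeta T_{i}}-1)-\zeta m_{i} = \E[e^{\zeta T_{i}}-1-\zeta T_{i}]$. This is the key trick that sidesteps centering: the mean cancels exactly, leaving the uncentered quantity $\E[e^{\zeta T_{i}}-1-\zeta T_{i}]$, whose control will involve precisely the moments appearing in the hypotheses.

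Next I would establish, for every $t\in\R$ and $\zeta>0$, the pointwise inequality $e^{\zeta t}-1-\zeta t \le \frac{\zeta^{2}t^{2}}{2}+\sum_{k\ge 3}\frac{(\zeta t_{+})^{k}}{k!}$. Since $e^{\zeta t}-1-\zeta t = \frac{\zeta^{2}t^{2}}{2}+\sum_{k\ge 3}\frac{(\zeta t)^{k}}{k!}$, this amounts to $\sum_{k\ge 3}\frac{(\zeta t)^{k}}{k!}\le \sum_{k\ge 3}\frac{(\zeta t_{+})^{k}}{k!}$, which is an equality when $t\ge 0$ and, when $t<0$, reduces to the elementary bound $e^{x}\le 1+x+\frac{x^{2}}{2}$ for $x\le 0$ (itself obtained by noting that $x\mapsto 1+x+\frac{x^{2}}{2}-e^{x}$ is nonincreasing and vanishes at $0$). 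Taking expectations and applying monotone convergence to the positive-part series then yields $\E[e^{\zeta T_{i}}-1-\zeta T_{i}] \le \frac{\zeta^{2}}{2}\E[T_{i}^{2}]+\sum_{k\ge 3}\frac{\zeta^{k}}{k!}\E[(T_{i})_{+}^{k}]$.

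Finally I would sum over $i$, insert the two hypotheses $\sum_{i}\E[T_{i}^{2}]\le v$ and $\sum_{i}\E[(T_{i})_{+}^{k}]\le \frac{v\,k!\,w^{k-2}}{2}$, and evaluate the resulting geometric series: the $k\ge 3$ contribution becomes $\frac{v}{2}\sum_{k\ge 3}\zeta^{k}w^{k-2}=\frac{v}{2}\cdot\frac{\zeta^{3}w}{1-\zeta w}$, which converges precisely because $\zeta<1/w$, and adding the variance term gives $\frac{v\zeta^{2}}{2}+\frac{v\zeta^{3}w}{2(1-\zeta w)}=\frac{v\zeta^{2}}{2(1-w\zeta)}$. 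Exponentiating $\log \E e^{\zeta S}\le \frac{v\zeta^{2}}{2(1-w\zeta)}$ produces the claim. The only delicate points are the justification of the termwise expectation and summation (handled by monotone convergence, finiteness of all moments being guaranteed by the Bernstein hypothesis for $\zeta w<1$) and the mean-cancellation step above, which I regard as the crux, since a naive attempt to bound $\E[(T_{i}-m_{i})_{+}^{k}]$ directly by $\E[(T_{i})_{+}^{k}]$ would fail when $m_{i}<0$.
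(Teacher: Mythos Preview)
Your proof is correct and is essentially the standard argument for this Bernstein-type moment-generating-function bound. The paper itself does not prove this lemma: it merely cites Proposition~2.9 (Inequality~(2.21)) in Massart's Saint-Flour lecture notes. The argument you give --- reducing to $\E[e^{\zeta T_i}-1-\zeta T_i]$ via $\log u\le u-1$ so that the centering cancels, then bounding the tail of the exponential series by positive-part moments and summing the resulting geometric series --- is precisely the proof one finds there, so there is nothing to compare.
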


\begin{proof}[Proof of Theorem \ref{SOI}]
For any $\theta \in \Theta_{K+1}$ define the random variables
$$ T_{i} =T_i(\theta)=  - \left(Y_{i}-f_{\theta}(X_{i})\right)^{2}
                    + \left(Y_{i}-f_{\bar{\theta}}(X_{i})\right)^{2} .$$
Note that these variables are independent. We have
\begin{multline*}
\sum_{i=1}^{n} \mathbb{E}[T_{i}^{2}] = \sum_{i=1}^{n} \mathbb{E}
\left[
       \left[2Y_{i} - f_{\bar{\theta}}(X_{i})-f_{\theta}(X_i)\right]^{2}
\left[f_{\bar{\theta}}(X_{i})-f_{\theta}(X_i)\right]^2
            \right]
\\
= \sum_{i=1}^{n} \mathbb{E} \left[
       \left[2W_{i} +2f(X_i) - f_{\bar{\theta}}(X_{i}) - f_{\theta}(X_{i})\right]^{2}
\left[f_{\bar{\theta}}(X_{i})-f_{\theta}(X_i)\right]^2
            \right]
\\
\leq \sum_{i=1}^{n} \mathbb{E} \left[
        \left[8 W_{i}^{2} + 2(2\|f\|_\infty + L(2K+1))^2\right]
\left[f_{\bar{\theta}}(X_{i})-f_{\theta}(X_i)\right]^2
            \right]
\\
= \sum_{i=1}^{n} \mathbb{E} \left[ 8 W_{i}^{2} + 2(2\|f\|_\infty +
L(2K+1))^2\right]
     \mathbb{E} \left[\left[f_{\bar{\theta}}(X_{i})-f_{\theta}(X_i)\right]^2
            \right]
\\
\leq n \left[ 8 \sigma^{2} + 2(2\|f\|_\infty + L(2K+1))^2\right]
\left[R(\theta) - R(\bar{\theta})\right]=:v(\theta,\bar{\theta})=v,
\end{multline*}
where we have used in the last line Pythagore's Theorem to prove
$\|f_{\theta} -
f_{\bar{\theta}}\|^2 = R(\theta) - R(\bar{\theta})$. Next we have,
for any integer $k\geq 3$, that
\begin{multline*}
\sum_{i=1}^{n} \mathbb{E}\left[(T_{i})_{+}^{k}\right] \leq
\sum_{i=1}^{n} \mathbb{E} \left[
       \left|2Y_{i} - f_{\bar{\theta}}(X_{i})-f_{\theta}(X_i)\right|^{k}
\left|f_{\bar{\theta}}(X_{i})-f_{\theta}(X_i)\right|^k
            \right]
\\
\leq \sum_{i=1}^{n} \mathbb{E} \left[
       2^{2k-1}\left[  |W_{i}|^{k} +(\|f\|_\infty + L(K+1/2))^{k} \right]
\left|f_{\bar{\theta}}(X_{i})-f_{\theta}(X_i)\right|^k
            \right]
\\
\leq \sum_{i=1}^{n} \mathbb{E} \left[
       2^{2k-1}\left[  |W_{i}|^{k} \!+\!(\|f\|_\infty \!+\! L(K\!+\! 1/2))^{k} \right]
       [L(2K\!+\!1)]^{k-2}
\left[f_{\bar{\theta}}(X_{i})\!-\!f_{\theta}(X_i)\right]^{2}
            \right]
\\
\leq 2^{2k-1}\left[\sigma^{2}k!\xi^{k-2}
+(\|f\|_\infty+L(K+1/2))^{k} \right]
          [L(2K+1)]^{k-2}\\
 \hspace*{5cm}{}\times         \sum_{i=1}^{n}\mathbb{E}\left[\left[f_{\bar{\theta}}(X_{i})-f_{\theta}(X_i)\right]^{2}
            \right]
\\
\leq  \frac{  (\sigma^{2}k!\xi^{k-2} +(\|f\|_\infty+L(K+1/2))^{k})(4L(2K+1))^{k-2} }{4(\sigma^{2} + (\|f\|_\infty +
L(K+1/2))^{2})}v
\\
\leq \frac{1}{4}\left( k!\xi^{k-2} + [\|f\|_\infty + L(K+1/2)]^{k-2}
\right)[4L(2K+1)]^{k-2}v
\\
\leq \frac{2}{4}k!\left( \xi + [\|f\|_{\infty} + L(K+1/2)]
\right)^{k-2}[4L(2K+1)]^{k-2v} \leq v\frac{k!w^{k-2}}{2},
\end{multline*}
with $ w:=8(\xi + [\|f\|_\infty + L(K+1/2)])L(K+1/2)$.

Next, for any $\lambda\in (0,n/w)$ and $\theta\in\Theta_{K+1}$,
applying Lemma \ref{lemmemassart} with $\zeta=\lambda/n$ gives
$$
\mathbb{E} \exp\left[\lambda
\Bigl(R(\theta)-R(\bar{\theta})-r(\theta)+r(\bar{\theta})\Bigr)\right]
\leq
\exp\left[\frac{v\lambda^{2}}{2n^{2}(1-\frac{w\lambda}{n})}\right].
$$
Set $C=8\left(\sigma^{2} + [\|f\|_\infty + L(K+1/2)]^{2}\right)$.
For the sake of simplicity let us put
\begin{equation}
\label{defbeta}
\beta = \left(\lambda
-\frac{\lambda^{2}C}{2n(1-\frac{w\lambda}{n})}\right) .
\end{equation}
For any $\varepsilon
>0$ the last display yields
$$
\mathbb{E} \exp\left[\beta
                      \Bigl(R(\theta)-R(\bar{\theta})\Bigr)
                    +\lambda\Bigl(-r(\theta)+r(\bar{\theta})\Bigr)
         - \log\frac{2}{\varepsilon}\right] \leq \frac{\varepsilon}{2}.
$$
Integrating w.r.t. the probability distribution ${\rm m}(\cdot)$ we get
$$
\int \mathbb{E} \exp\Biggl[\beta
                      \Bigl(R(\theta)-R(\bar{\theta})\Bigr)
                    +\lambda\Bigl(-r(\theta)+r(\bar{\theta})\Bigr)
         - \log\frac{2}{\varepsilon}\Biggr] {\rm m}(d\theta) \leq \frac{\varepsilon}{2}.
$$
Next, Fubini's theorem gives
\begin{multline*}
\mathbb{E} \int \exp\Biggl[\beta
                      \Bigl(R(\theta)-R(\bar{\theta})\Bigr)
                    +\lambda\Bigl(-r(\theta)+r(\bar{\theta})\Bigr)
         - \log\frac{2}{\varepsilon}\Biggr] {\rm m}(d\theta)
\\
=
\mathbb{E} \int \exp\Biggl[\beta
                      \Bigl(R(\theta)-R(\bar{\theta})\Bigr)
                    +\lambda\Bigl(-r(\theta)+r(\bar{\theta})\Bigr)
                 - \log\left[\frac{d\tilde{\rho}_{\lambda}}{d{\rm m}}(\theta)\right]
         - \log\frac{2}{\varepsilon}\Biggr] \tilde{\rho}_{\lambda}(d\theta)
\\
\leq \frac{\varepsilon}{2}.
\end{multline*}
Jensen's inequality yields
$$
\mathbb{E} \exp\Biggl[\beta
                      \left(\int R d\tilde{\rho}_{\lambda}-R(\bar{\theta})\right)
                    +\lambda\left(-\int r d\tilde{\rho}_{\lambda}+r(\bar{\theta})\right)
                 - \mathcal{K}(\tilde{\rho}_{\lambda},{\rm m})
         - \log\frac{2}{\varepsilon}\Biggr] \leq \frac{\varepsilon}{2}.
$$
Now, using the basic inequality $\exp(x) \geq
\mathbf{1}_{\mathbb{R}_{+}}(x)$ we get
$$
\mathbb{P}\Biggl\{ \beta
                      \left(\int R d\tilde{\rho}_{\lambda}-R(\bar{\theta})\right)
                    +\lambda\left(-\int r d\tilde{\rho}_{\lambda}+r(\bar{\theta})\right)
                 - \mathcal{K}(\tilde{\rho}_{\lambda},{\rm m})
         - \log\frac{2}{\varepsilon}\Biggr] \geq 0
\Biggr\} \leq \frac{\varepsilon}{2}.
$$
Using Jensen's inequality again gives
$$ \int R d\tilde{\rho}_{\lambda} \geq R\left(\int\theta\tilde{\rho}_{\lambda}(d\theta)\right)
                                      = R(\tilde{\theta}_{\lambda}).$$
Combining the last two displays we obtain
\begin{equation*}
\mathbb{P}\Biggl\{ R(\tilde{\theta}_{\lambda}) - R(\bar{\theta})
\leq \frac{ \int r d\tilde{\rho}_{\lambda} - r(\bar{\theta}) +
\frac{1}{\lambda}\left[\mathcal{K}(\tilde{\rho}_{\lambda},{\rm
m})+\log\frac{2}{\varepsilon}\right] } {\frac{\beta}{\lambda} }
\Biggr\} \geq 1-\frac{\varepsilon}{2}.
\end{equation*}

Now, using Lemma 1.1.3 in Catoni \cite{manuscrit} we obtain that
\begin{equation}\label{interm3bis}
\mathbb{P}\Biggl\{ R(\tilde{\theta}_{\lambda}) - R(\bar{\theta})
\leq \inf_{\rho\in\mathcal{M}_{+}^{1}(\Theta_{K+1})} \frac{ \int r
d\rho - r(\bar{\theta}) +
\frac{1}{\lambda}\left[\mathcal{K}(\rho,{\rm
m})+\log\frac{2}{\varepsilon}\right] } {\frac{\beta}{\lambda} } \Biggr\} \geq 1-\frac{\varepsilon}{2}.
\end{equation}

We now want to bound from above $r(\theta) - r(\bar{\theta})$ by
$R(\theta)- R(\bar{\theta})$. Applying Lemma \ref{lemmemassart} to
$\tilde{T}_i(\theta) = - T_i(\theta)$ and similar computations as
above yield successively
$$
\mathbb{E} \exp\left[\lambda \Bigl(R(\bar{\theta})-R(\theta) +
r(\theta) - r(\bar{\theta})\Bigr)\right] \leq
\exp\left[\frac{v\lambda^{2}}{2n^{2}(1-\frac{w\lambda}{n})}\right],
$$
and so for any (data-dependent) $\rho$,
$$
\mathbb{E} \exp\Biggl[\gamma\left(-\int Rd\rho + R(\bar{\theta})\right)
+ \lambda \left( \int r d\rho - r(\bar{\theta})\right) -
\mathcal{K}(\rho,{\rm m}) - \log \frac{2}{\varepsilon}\Biggr] \leq
\frac{\varepsilon}{2},
$$
where
\begin{equation}
\label{defgamma}
\gamma = \left(\lambda
+\frac{\lambda^{2}C}{2n(1-\frac{w\lambda}{n})}\right) .
\end{equation}
Now,
\begin{equation}
\label{interm4} \mathbb{P}\Biggl\{ \int rd\rho - r(\bar{\theta})
\leq \frac{\gamma}{\lambda} \left[\int
Rd\rho - R(\bar{\theta}) \right] + \frac{1}{\lambda}\left[
\mathcal{K}(\rho,\rm m) + \log \frac{2}{\varepsilon} \right]
\Biggr\}\geq 1 - \frac{\varepsilon}{2}.
\end{equation}
Combining (\ref{interm4}) and (\ref{interm3bis}) with a union bound
argument gives
\begin{multline*}
\mathbb{P}\Biggl\{ R(\tilde{\theta}_{\lambda}) - R(\bar{\theta})
\\
\leq \inf_{\rho\in\mathcal{M}_{+}^{1}(\Theta_{K+1})} \frac{ \gamma \left[\int Rd\rho -
R(\bar{\theta}) \right] + 2 \left[
\mathcal{K}(\rho,\rm m) + \log \frac{2}{\varepsilon} \right] } {
\beta  } \Biggr\} \\
\geq 1-\varepsilon,
\end{multline*}
where $\mathcal{M}_{+}^{1}(\Theta_{K+1})$ is the set of all
probability measures over $\Theta_{K+1}$.

Now for any $\delta\in (0,1]$ taking $\rho$ as the uniform
probability measure on the set $ \{ t\in\Theta(J(\bar{\theta})):
|t-\bar{\theta}|_{1} \leq \delta \}\subset
\Theta_{K+1}(J(\bar{\theta}))$ gives
\begin{multline*}
\mathbb{P}\Biggl\{ R(\tilde{\theta}_{\lambda}) \leq  R(\bar{\theta})
+ \frac{1}{\beta}  \Biggl[
\gamma L^{2}\delta^2
+ 2 \Biggl(|J(\bar{\theta})|\log\frac{K+1}{\delta}
\\
+
|J(\bar{\theta})|\log \frac{1}{\alpha}
+\log\left(\frac{1}{1-\alpha}\right)+ \log {p\choose
|J(\bar{\theta})|} +\log\frac{2}{\varepsilon}\Biggr) \Biggr]\Biggr\}
 \geq 1-\varepsilon.
\end{multline*}
Taking $\delta=n^{-1}$ and the inequality $ \log {p\choose
|J(\bar{\theta})|} \leq |J(\bar{\theta})|\log
\frac{pe}{|J(\bar{\theta})|}$ gives
\begin{multline}
\label{boundlambda} \mathbb{P}\Biggl\{ R(\tilde{\theta}_{\lambda})
\leq  R(\bar{\theta}) + \frac{1}{1-\frac{\lambda C}{2(n-w\lambda)}}
\Biggl[ \left(1+\frac{\lambda
C}{2(n-w\lambda)}\right)\frac{L^2}{n^2}
\\
+ \frac{2}{\lambda}\left(|J(\bar{\theta})|\log \left(K+1\right) +
|J(\bar{\theta})|\log
\left(\frac{epn}{\alpha|J(\bar{\theta})|}\right)
+\log\left(\frac{2}{\varepsilon(1-\alpha)}\right)\right)
\Biggr]\Biggr\} \geq 1-\varepsilon
\end{multline}
where we replaced $\gamma$ and $\beta$ by their definitions, see
\eqref{defbeta} and \eqref{defgamma}.
Taking now $\lambda= n/(2\mathcal{C}_1)$ (where we recall that
$\mathcal{C}_1 = C\vee w$) in (\ref{boundlambda}) gives
\begin{multline*}
\mathbb{P}\Biggl\{ R(\tilde{\theta}_{\lambda}) \leq R(\bar{\theta})
+ \frac{3L^2}{n^2} + \frac{8\mathcal{C}_1}{n}\Biggl[
|J(\bar{\theta})|\log\left(K+1\right)
\\
+ \left(|J(\bar{\theta})|\log\left(\frac{enp}{\alpha
|J(\bar{\theta})|} \right) +
\log\left(\frac{2}{\varepsilon(1-\alpha)}\right)\right)
\Biggr]\Biggr)\Biggr\} \geq 1-\varepsilon,
\end{multline*}
where we have used that $1-\frac{\lambda C}{2(n-w\lambda)} \geq 1/2$
and $ 1+\frac{\lambda C}{2(n-w\lambda)}\leq 3/2$.
\end{proof}

\end{document}